\newcommand{\N}{\mathbb{N}}
\newcommand{\Z}{\mathbb{Z}}
\newcommand{\R}{\mathbb{R}}
\DeclareMathOperator{\End}{End}
\DeclareMathOperator{\im}{im}
\DeclareMathOperator{\id}{id}
\DeclareMathOperator{\Supp}{Supp}
\DeclareMathOperator{\Span}{Span}
\DeclareMathOperator{\degree}{deg}
\DeclareMathOperator{\Char}{char}
\DeclareMathOperator{\Rank}{rank}
\title{Very good gradings on matrix rings are epsilon-strong}
\author{Patrik Lundstr\"om}
\address{Department of Engineering Science,
University West, SE-46186 Trollhättan, Sweden}
\email{patrik.lundstrom@hv.se}
\author{Johan \"Oinert }
\address{Department of Mathematics and Natural Sciences,
Blekinge Institute of Technology,
SE-37179 Karlskrona, Sweden}
\email{johan.oinert@bth.se}
\author{Laura  Orozco}
\address{Escuela de Matematicas, Universidad Industrial de Santander, Cra. 27 Calle 9  UIS
	Edificio 45, Bucaramanga, Colombia} \email{lnorogar@correo.uis.edu.co}
\author{H\'ector  Pinedo}
\address{Escuela de Matematicas, Universidad Industrial de Santander, Cra. 27 Calle 9  UIS
	Edificio 45, Bucaramanga, Colombia} 
	\email{hpinedot@uis.edu.co}
	\subjclass[2020]{16S50, 16W50}
\keywords{matrix ring, good grading, very good grading, epsilon-strongly graded ring, unital partial crossed product}
	\date{\today}
\theoremstyle{plain}
\newtheorem{theorem}{Theorem}[section]
\newtheorem{lemma}[theorem]{Lemma}
\newtheorem{prop}[theorem]{Proposition}
\newtheorem{cor}[theorem]{Corollary}
\theoremstyle{definition}
\newtheorem{defn}[theorem]{Definition}
\newtheorem{exmp}[theorem]{Example}
\theoremstyle{remark}
\newtheorem{remark}[theorem]{Remark}
\begin{document}

\maketitle

\begin{abstract}
We investigate properties of group gradings on matrix rings $M_n(R)$, where $R$ is an associative unital ring and $n$ is a positive integer. More precisely, we introduce very good gradings and show that any very good grading on $M_n(R)$ is necessarily epsilon-strong.
We also identify 
a condition that is sufficient to guarantee that $M_n(R)$ is an epsilon-crossed product, i.e. isomorphic to a crossed product associated with a unital twisted partial action.
In the case where $R$ has IBN, we are able to provide a characterization of when $M_n(R)$ is an epsilon-crossed product.
Our results are illustrated by several examples.
\end{abstract}

\section{Introduction}

Suppose that $G$ is a group, $n$ is a positive
integer, $K$ is a field, and let
$M_n(K)$ denote the ring of all $n \times n$ 
matrices with entries in $K$.
A classical problem in algebra, supposedly 
formulated by Zelmanov \cite[Problem 1]{kelarev1995},
asks for a description of all $G$-gradings on 
$M_n(K)$, that is, to describe all
decompositions into $K$-vector subspaces 
$M_n(K) = \oplus_{g \in G} M_n(K)_g$ 
such that for all $g,h \in G$ the inclusion 
$M_n(K)_g M_n(K)_h \subseteq M_n(K)_{gh}$ 
holds.
This problem has attracted a lot of attention
(see e.g.
\cite{BSZ}--\cite{lundstrom2014} and 
\cite{val02}--\cite{zai01}) and it has been
solved for particular 
classes of fields $K$ and groups $G$, but,
to the best knowledge of the authors
of the present article, the general case
is still unsettled.
However, if we restrict ourselves to 
finding all 
\emph{good} $G$-gradings on $M_n(K)$, 
then a complete answer is given 
in \cite{das99}. 
Namely, following the
notation introduced in loc. cit.,
a $G$-grading on $M_n(K)$ is called good 
if each $M_n(K)_g$, for $g \in G$,
considered as a left vector space over $K$, 
has a basis consisting of matrices of the
type $e_{i,j}$ with 1 in position $(i,j)$ and zeros in every other position. 
In \cite[Prop.~2.1]{das99}, it was shown 
that there is a bijective correspondence
between the set of all maps 
$f : \{ 1,\ldots,n-1 \} \to G$ and the set 
of all good $G$-gradings on $M_n(K)$,
where each such $f$ is associated with the 
good grading on $M_n(K)$ induced by the
requirement that $e_{i,i+1} \in M_n(K)_{f(i)}$, for 
$i \in \{1,\ldots,n-1\}$.
This can then be used to obtain 
criteria for when a given 
$G$-grading on $M_n(K)$ is strong or a
$G$-crossed product (see
\cite[Prop.~2.3 and Prop.~2.6]{das99}). 
In \cite{das2001}, a bijection, similar to
the one described above was 
obtained in the more general case where 
the grading is defined by a semigroup, 
the bijection in that
case depending on the structure of all simple and zero simple principal factors in
the semigroup.
Analogous problems for magma gradings (and
filtrations) on rings have also 
been considered in \cite{lundstrom2014}. 
Good  gradings have also been defined for other 
types of algebras such as subalgebras of matrix algebras over more general algebraic structures, incidence algebras \cite{jones, price,priceCorr}, and algebras with multiplicative bases \cite{BZS}.

In this article, we wish to generalize
some of the results mentioned above to
the case of matrix rings over 
arbitrary associative unital rings. 
To that end, we choose to distinguish between \emph{good} and \emph{very good} gradings (see Definition~\ref{def:GoodVeryGood}).
Noteworthily, 
we obtain our results 
using novel methods, namely by
utilizing results from the theory of
so-called \emph{epsilon-strongly graded rings} (see e.g. \cite{NYO}).
Our first main result (see Theorem~\ref{thm:main}) asserts that any very good grading on a matrix ring is necessarily \emph{epsilon-strong}.
Moreover, for such gradings our second main result provides sufficient conditions ensuring that the matrix ring is an \emph{epsilon-crossed product} (see Theorem~\ref{thm:pre}).
Our third main result offers a complete characterization of when a matrix ring over an IBN-ring is an epsilon-crossed product (see Theorem~\ref{thm:epsiloncrossed}).
Recall that, by \cite[Thm. 35]{NYO}, any 
epsilon-crossed product is isomorphic to a crossed product by a unital twisted partial action.
Algebras associated with partial actions, for instance, crossed products by unital twisted partial actions,
is a young but very active research area. For an account of numerous results in this direction,
we refer the reader to \cite{Dokuchaev2019,ExelBook} and the references therein.

\section{Good and very good group gradings}\label{goode}

Throughout this article, 
$R$ and $S$ 
are associative unital
rings,
$n$ 
is
a positive integer, 
$\overline{n}$ denotes the set 
$\{ 1,\ldots,n \}$, and $G$ 
is a group with identity 
element $e$.

\subsection{Gradings of bases}
In this article, unless otherwise stated,
$V$ denotes a free
left $R$-module
with a fixed basis 
$\overline{v} :=
\{ v_i \}_{i \in \overline{n}}$.
By a \emph{grading of $\overline{v}$} 
we mean a
map $\degree : \overline{v} \to G$.
Note that such a map induces a 
$G$-grading on $V$ in the sense
that $V = \oplus_{g \in G} V_g$
as left $R$-modules, where 
$V_g := \sum_{v \in \degree^{-1}(g)} 
Rv$, for $g \in G$.
On the other hand, suppose that
we are given a $G$-grading 
$V = \oplus_{g \in G} V_g$
on $V$. 
We say that this grading 
\emph{respects
$\overline{v}$}
if 
for each $i \in I$ there is $g_i \in G$
such that $v_i \in V_{g_i}$.
In that case, 
we can define a grading $\degree$
of $\overline{v}$ by putting
$\degree(v_i) := g_i$.
Clearly, the above constructions
yield a bijective correspondence 
between the set of gradings on 
$\overline{v}$ and the set of
gradings on $V$ respecting 
$\overline{v}$.

\subsection{Graded rings}
For the rest of this article,
suppose that $S$ is 
\emph{$G$-graded} as a ring. 
Recall that this means that 
there for each $g \in G$
is an additive subgroup $S_g$ of $S$ such
that $S = \oplus_{g \in G} S_g$, as
additive groups, and
for all $g,h \in G$ the inclusion
$S_g S_h \subseteq S_{gh}$ holds.
If $s \in S_g \setminus \{ 0 \}$,
then we write $\degree(s) := g$.
The $G$-grading on $S$ is said to be \emph{strong} if $S_g S_h = S_{gh}$
for all $g,h \in G$. Recall that $S$ is 
said to be a \emph{$G$-crossed product} if, for every
$g \in G$, $S_g$ contains an element which
is invertible in $S$. If $S$ is
a $G$-crossed product, then $S$ is 
strongly $G$-graded 
(see \cite[Rem. 1.1.2]{nas04}).
Recall from \cite{NYO}, that $S$ is said to be
\emph{epsilon-strongly $G$-graded} if, for 
every $g \in G$, there is some $\epsilon_g \in S_g S_{g^{-1}}$
such that 
$\epsilon_g s = s \epsilon_{g^{-1}} = s$ 
for every $s \in S_g$.
Note that, in that case, $\epsilon_g$, for $g\in G$, is uniquely defined by the grading.

If $S$ is epsilon-strongly $G$-graded,
then $S$ is said to be an 
\emph{epsilon-crossed product}
if, for every $g \in G$, there are $s_g \in S_g$ 
and
$t_{g^{-1}} \in S_{g^{-1}}$ such that
$s_g t_{g^{-1}} = \epsilon_g$ and 
$t_{g^{-1}} s_g = \epsilon_{g^{-1}}.$ 
Note that any 
epsilon-crossed product is isomorphic to a unital partial crossed product (see \cite[Thm. 35]{NYO}).

\subsection{Good and very good gradings on matrix rings}
For the rest of this article,
$M_n(R)$ denotes the ring of all 
$n \times n$ 
matrices with entries in $R$.

\begin{defn}\label{def:GoodVeryGood}
Suppose that $S:=M_n(R)$ is equipped with a $G$-grading.
\begin{itemize}
    \item[(i)] The grading is said to be \emph{good}, if 
$e_{i,j}$ is homogeneous for all $i,j \in \overline{n}$.

    \item[(ii)] The grading is said to be \emph{very good}, if for all $i,j \in \overline{n}$ there is $g \in G$ with
    $R e_{i,j} \subseteq S_g$.
\end{itemize}
\end{defn}

\begin{exmp}\label{ex:ABgrading}
Suppose that $S:=M_n(R)$ is equipped with a 
$G$-grading. If this grading is very good,
then it is, of course, good.
However, the reversed implication does not
always hold. Indeed,
let $A$ and $B$ be groups, and endow $R$ with any nontrivial $A$-grading. To every element $e_{i,j}$, we assign a $B$-degree  such that $\degree(e_{i,j}) \degree(e_{j,k}) = \degree(e_{i,k})$ for every $k\in \overline{n}$. Then the ring $M_n(R)$ is in a natural way graded by the product group $A \times B$. Moreover, $e_{i,j}$ will be homogeneous (w.r.t. the $A \times B$-grading) for all $i,j \in \overline{n}$. However, $r e_{i,i}$ will be non-homogeneous (w.r.t. the $A\times B$-grading) for any $r \in R$ that is non-homogeneous (w.r.t. the $A$-grading). A concrete example of this
would be to consider the field $R = \mathbb{C}$ of complex numbers equipped with its natural $\mathbb{Z}_2$-grading. Put $\degree(e_{1,1})=\degree(e_{2,2})=0$ and $\degree(e_{1,2})=\degree(e_{2,1})=1.$ Then $M_2(\mathbb{C})$ is $\mathbb Z_2 \times \mathbb Z_2$-graded. But for instance $(1+i)e_{1,1}$ is non-homogeneous w.r.t. the $\mathbb Z_2 \times \mathbb Z_2$-grading.
\end{exmp}

The exact relationship between good and very
good gradings is explained in the following:

\begin{prop}
Suppose that $S:=M_n(R)$ is equipped with 
a good $G$-grading. Then the following assertions
are equivalent:
\begin{itemize}
    \item[(i)] the $G$-grading on $S$
    is very good;
    \item[(ii)] for every $i \in 
    \overline{n}$ the inclusion
    $R e_{i,i} \subseteq S_e$ holds;
     \item[(iii)] $R 1_S \subseteq S_e$.
\end{itemize}
\end{prop}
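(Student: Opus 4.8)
The plan is to first record two elementary facts about a good grading and then run the cyclic chain (i) $\Rightarrow$ (ii) $\Rightarrow$ (iii) $\Rightarrow$ (i). The first fact is that in any good grading each diagonal idempotent $e_{i,i}$ is forced to have degree $e$. Indeed, writing $g_i := \degree(e_{i,i})$, the relation $e_{i,i} = e_{i,i}^2$ places the nonzero homogeneous element $e_{i,i}$ simultaneously in $S_{g_i}$ and in $S_{g_i^2}$; since the homogeneous components of the decomposition $S = \oplus_{g \in G} S_g$ meet only in $0$, this yields $g_i^2 = g_i$, hence $g_i = e$. The second fact is purely matrix-theoretic: for $r \in R$ one has $r 1_S = \sum_{i \in \overline{n}} r e_{i,i}$ and $r e_{i,j} = (r 1_S) e_{i,j}$, where throughout $r e_{i,j}$ denotes the matrix with $r$ in position $(i,j)$ and zeros elsewhere.

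With these in hand, the three implications are short. For (i) $\Rightarrow$ (ii), very goodness supplies for each $i$ some $g \in G$ with $R e_{i,i} \subseteq S_g$; since $e_{i,i} = 1_R e_{i,i} \in R e_{i,i} \subseteq S_g$ while the first fact gives $e_{i,i} \in S_e$, directness of the decomposition forces $g = e$, whence $R e_{i,i} \subseteq S_e$. For (ii) $\Rightarrow$ (iii), I would expand $r 1_S = \sum_{i \in \overline{n}} r e_{i,i}$ and note that each summand lies in $S_e$ by (ii), so the sum does too. For (iii) $\Rightarrow$ (i), given $i,j \in \overline{n}$ set $g := \degree(e_{i,j})$; then for every $r \in R$ we have $r e_{i,j} = (r 1_S) e_{i,j} \in S_e S_g \subseteq S_{eg} = S_g$, so $R e_{i,j} \subseteq S_g$, as required.

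Nothing here is technically deep; the only point that requires care---and the one I would flag as the crux---is the preliminary claim that $\degree(e_{i,i}) = e$, since it is exactly the place where goodness (homogeneity of $e_{i,i}$) is combined with the idempotent relation and the directness of $S = \oplus_{g \in G} S_g$. Once that observation is isolated, each implication reduces to the single identity $r e_{i,j} = (r 1_S) e_{i,j}$ together with the grading inclusion $S_g S_h \subseteq S_{gh}$, so I would keep the write-up compact and resist spelling out the routine matrix arithmetic.
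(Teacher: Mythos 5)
Your proposal is correct and follows essentially the same route as the paper: establish $\degree(e_{i,i})=e$ from the idempotent relation and directness of the grading, then run (i)$\Rightarrow$(ii)$\Rightarrow$(iii)$\Rightarrow$(i) via $r1_S=\sum_i re_{i,i}$ and $re_{i,j}=(r1_S)e_{i,j}\in S_eS_g\subseteq S_g$. The only difference is that you spell out more explicitly why the degree $g$ with $Re_{i,i}\subseteq S_g$ must equal $e$, which the paper leaves implicit.
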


\begin{proof}
(i)$\Rightarrow$(ii): 
Suppose that (i) holds. Take $i \in 
\overline{n}$. Since $e_{i,i}$ is a 
nonzero idempotent it follows that
$\deg(e_{i,i}) = e$. From (i) it follows 
that $R e_{i,i} \subseteq S_e$.

(ii)$\Rightarrow$(iii):
Suppose that (ii) holds. Then
$R 1_S = R \sum_{i=1}^n e_{i,i} 
\subseteq \sum_{i=1}^n R e_{i,i} 
\subseteq S_e$. 

(iii)$\Rightarrow$(i): 
Suppose that (iii) holds.
Take $i,j \in \overline{n}$ and
put $g := \deg( e_{i,j} )$. By (iii)
we get that
$R e_{i,j} = (R 1_S) e_{i,j} \subseteq 
S_e S_g \subseteq S_g$.
\end{proof}

It is not hard to see that a 
very good $G$-grading on $M_n(R)$ is 
determined by the $n$-tuple 
$\overline{g} = \{ g_1,\ldots,g_n \}$
where $g_i := \degree(e_{i,1})$,
for $i \in \overline{n}$.
Note that $g_1^2 = \degree( e_{1,1}^2 ) = \degree(e_{1,1}) = g_1$
so that necessarily $g_1 = e$.
This very good grading defines a 
grading on $\overline{v}$ if 
we put $\degree(v_i) := g_i$ for
$i \in  \overline{n}$.
This, in turn, defines a grading
on the free left $R$-module $V:=R^n$.
Note that $\degree(v_1) = g_1 = e$.
Thus, in particular,
$e \in \Supp(V)$, where 
$\Supp(V):=\{g\in G\mid V_g\neq 0\}$.
For all $i,j \in \overline{n}$ we consider $e_{i,j}$
as an element of $\End_R(V)$ in the following way.
Take $i,j,k \in \overline{n}$. 
Define $e_{i,j}(v_j) := v_i$, and $e_{i,j}(v_k) := 0$,
if $k \neq j$. Note that $e_{i,j}$ is then also 
a graded linear map of degree $\degree(e_{i,j}) = g_i g_j^{-1}$
since $\degree( e_{i,j}(v_j) ) = \degree(v_i) = g_i =
g_i g_j^{-1} g_j = \degree(e_{i,j} ) \degree(v_j)$
for all $i,j \in \overline{n}$.

\begin{theorem}\label{thm:main}
Every very good $G$-grading on $M_n(R)$ is an 
epsilon-strong $G$-grading.
\end{theorem}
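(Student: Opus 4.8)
The plan is to exhibit, for each $g \in G$, an explicit element $\epsilon_g \in S_g S_{g^{-1}}$ and to verify directly that it satisfies the defining relations of an epsilon-strong grading. First I would record, from the discussion preceding the theorem, that the very good grading is completely determined by the tuple $\overline{g} = \{ g_1,\ldots,g_n \}$ with $g_i = \degree(e_{i,1})$, and that $\degree(e_{i,j}) = g_i g_j^{-1}$ for all $i,j \in \overline{n}$. Since $M_n(R) = \bigoplus_{i,j} R e_{i,j}$ as additive groups and each summand $R e_{i,j}$ is homogeneous of degree $g_i g_j^{-1}$, grouping by degree yields the clean description
\[
S_g = \bigoplus_{\substack{(i,j)\,:\, g_i g_j^{-1} = g}} R e_{i,j},
\]
i.e.\ $S_g$ consists precisely of the matrices supported on the cells $(i,j)$ with $g_i g_j^{-1} = g$.

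With this description in hand, for each $g \in G$ I would introduce the index set $I_g := \{ i \in \overline{n} : g_i g_j^{-1} = g \text{ for some } j \in \overline{n} \}$ and set $\epsilon_g := \sum_{i \in I_g} e_{i,i}$, a diagonal idempotent (with $\epsilon_g = 0$ when $S_g = 0$). The first point to check is the membership $\epsilon_g \in S_g S_{g^{-1}}$: for each $i \in I_g$ pick $j$ with $g_i g_j^{-1} = g$; then $e_{i,j} \in S_g$ and $e_{j,i} \in S_{g^{-1}}$, and since $e_{i,j} e_{j,i} = e_{i,i}$ we get $e_{i,i} \in S_g S_{g^{-1}}$. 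Summing over $i \in I_g$ gives $\epsilon_g \in S_g S_{g^{-1}}$.

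It then remains to verify the relations $\epsilon_g s = s$ and $s \epsilon_{g^{-1}} = s$ for all $s \in S_g$; by additivity it suffices to treat a homogeneous generator $s = r e_{i,j}$ with $g_i g_j^{-1} = g$. For the left relation, $g_i g_j^{-1} = g$ gives $i \in I_g$, so the multiplication rule $e_{k,k}(r e_{i,j}) = \delta_{k,i}\, r e_{i,j}$ forces $\epsilon_g (r e_{i,j}) = r e_{i,j}$. For the right relation I would exploit the symmetry $g_i g_j^{-1} = g \iff g_j g_i^{-1} = g^{-1}$, which shows $j \in I_{g^{-1}}$ (with witness $i$); then $(r e_{i,j}) e_{l,l} = \delta_{j,l}\, r e_{i,j}$ yields $(r e_{i,j}) \epsilon_{g^{-1}} = r e_{i,j}$. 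This establishes both identities and completes the argument.

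I do not anticipate a serious obstacle once the correct candidate for $\epsilon_g$ is identified; the crux lies precisely in guessing that $\epsilon_g$ should be the diagonal idempotent supported on the set $I_g$ of indices of the nonzero rows of $S_g$, together with the observation that the symmetry $g_i g_j^{-1} = g \iff g_j g_i^{-1} = g^{-1}$ pairs $I_g$ with $I_{g^{-1}}$ in exactly the manner required by the right-hand relation. The very good hypothesis enters through the clean decomposition of $S_g$ displayed above: it is what allows $\epsilon_g$ to act as a two-sided local unit on all of $S_g$, coefficients in $R$ included, rather than merely on the matrix units $e_{i,j}$.
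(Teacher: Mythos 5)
Your proposal is correct and follows essentially the same route as the paper: your $I_g$ is the paper's $\mathcal{L}_g$, your $\epsilon_g = \sum_{i \in I_g} e_{i,i}$ is the paper's candidate, and your symmetry $g_i g_j^{-1} = g \iff g_j g_i^{-1} = g^{-1}$ is exactly the paper's observation that $\mathcal{R}_g = \mathcal{L}_{g^{-1}}$. You merely spell out the verifications that the paper leaves as ``not difficult to see.''
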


\begin{proof}
Put $S:=M_n(R)$ and suppose that it is equipped with a very good $G$-grading. For each $g \in G$, we put 
$\mathcal{L}_g:= \{ i \in \overline{n} \mid 
\text{there is some } j \in \overline{n} 
\text{ such that } e_{i,j} \in S_g\}$ 
and
$\mathcal{R}_g := \{ j \in \overline{n} \mid 
\text{there is some } i \in \overline{n}
\text{ such that } e_{i,j} \in S_g\}.$
Set $\epsilon_g := \sum_{i \in \mathcal{L}_g} e_{i,i}$
and $\epsilon_g' := \sum_{j \in \mathcal{R}_g} e_{j,j}$.
It is not difficult  to see that $\epsilon_g \in S_g S_{g^{-1}}$, $\epsilon_g' \in S_{g^{-1}} S_g$
and that
$\epsilon_g s = s \epsilon_g' = s$ 
for every $s\in S_g$.
Since $\mathcal{R}_g =\mathcal{L}_{g^{-1}}$ it follows that
$\epsilon_g' = \epsilon_{g^{-1}}$ and 
thus $S$ is epsilon-strongly $G$-graded.
\end{proof}

\begin{remark} 
By considering $M_n(R)$ as a so-called
\emph{Leavitt path algebra} over $R$ 
defined by a line
graph with $n$ vertices (see 
\cite[Example~1.4(i)]{AAPI} for such 
a presentation when $R$ is a field)
and using \cite[Thm.~4.1]{NO} (see also \cite{NOCorr}) and 
carefully analyzing its proof, 
it is possible to
recover the conclusion of
Theorem~\ref{thm:main} as well as 
the formula for $\epsilon_g$ above.
\end{remark}

In the following example, we show that 
Theorem~\ref{thm:main} cannot be generalized to good gradings.

\begin{exmp}
Consider the polynomial ring $\mathbb{C}[x]$ equipped with its natural $\mathbb{Z}$-grading.
Put $\degree(e_{1,1})=\degree(e_{2,2})=0$ and $\degree(e_{1,2})=\degree(e_{2,1})=1.$ Then, using the notation from Example \ref{ex:ABgrading}, the ring $S:=M_2(\mathbb{C}[x])$ is $\mathbb{Z} \times \mathbb{Z}_2$-graded and the grading
is good, but not very good. 
Moreover, this grading is not epsilon-strong, because $S_{(1,0)} S_{(-1,0)} = \{0\}$ while $x e_{1,1} \in S_{(1,0)}$ is nonzero.
\end{exmp}

Now we present a result that allows us to determine when a grading on a matrix ring is very good. 

\begin{prop}\label{simple}
Suppose that $R$ is simple and that 
$S:=M_n(R)$ is $G$-graded. %{\color{red}  that  $S_e$ is an $R$-module.}
Then the $G$-grading on $S$ is very good if 
and only if for every $i \in 
 \overline{n}$ the inclusion 
 $R e_{i,i} \subseteq S_e$ holds.
\end{prop}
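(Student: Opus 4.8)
The plan is to treat the two implications separately, with simplicity of $R$ being essential only for the ``if'' direction.

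For the ``only if'' direction, I would argue that no hypothesis beyond $R \neq 0$ is needed, so that it coincides with the implication (i)$\Rightarrow$(ii) of the proposition preceding Theorem~\ref{thm:main}. Assuming the grading is very good, for each $i \in \overline{n}$ there is some $g \in G$ with $R e_{i,i} \subseteq S_g$; in particular $e_{i,i} = 1_R e_{i,i}$ is homogeneous of degree $g$. Since $e_{i,i}$ is a nonzero idempotent, $g^2 = \degree(e_{i,i}^2) = \degree(e_{i,i}) = g$, forcing $g = e$ and hence $R e_{i,i} \subseteq S_e$.

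For the ``if'' direction, suppose $R e_{i,i} \subseteq S_e$ for all $i \in \overline{n}$; the goal is to show that for each pair $(i,j)$ the whole of $R e_{i,j}$ lands in a single homogeneous component. First I would record the homogeneous decomposition of the $(i,j)$-Peirce component. Since $1_S = \sum_i e_{i,i} \in S_e$, each $e_{i,i}$ lies in $S_e$; consequently $e_{i,i} S_g e_{j,j} \subseteq S_e S_g S_e \subseteq S_g$ for all $g$, and the Peirce component $R e_{i,j} = e_{i,i} S e_{j,j}$ inherits the grading as a direct sum of additive groups $R e_{i,j} = \bigoplus_{g \in G}(R e_{i,j} \cap S_g)$. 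Equivalently, setting $I^{(g)}_{i,j} := \{ r \in R : r e_{i,j} \in S_g \}$, one obtains $R = \bigoplus_{g \in G} I^{(g)}_{i,j}$ as additive groups.

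The crux is to show that each $I^{(g)}_{i,j}$ is a two-sided ideal of $R$. For the left-ideal property: given $a \in R$ and $r \in I^{(g)}_{i,j}$, I would write $a r e_{i,j} = (a e_{i,i})(r e_{i,j})$, observe that $a e_{i,i} \in S_e$ by hypothesis and $r e_{i,j} \in S_g$, so the product lies in $S_e S_g \subseteq S_g$, whence $ar \in I^{(g)}_{i,j}$; the right-ideal property follows symmetrically via $r a e_{i,j} = (r e_{i,j})(a e_{j,j})$ using $a e_{j,j} \in S_e$. Once this is established, simplicity of $R$ forces $I^{(g)}_{i,j} \in \{0, R\}$ for every $g$. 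Since $R = \bigoplus_{g} I^{(g)}_{i,j}$ is a \emph{direct} sum of these ideals and $R \neq 0$, exactly one summand equals $R$ and the rest vanish; that single $g$ satisfies $R e_{i,j} \subseteq S_g$. As $(i,j)$ was arbitrary, the grading is very good. The main obstacle I anticipate is precisely this ideal computation — in particular ensuring that the Peirce idempotents sit in $S_e$ so that multiplying on either side preserves the homogeneous degree; everything else is then a short direct-sum argument combined with simplicity.
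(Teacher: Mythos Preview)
Your proof is correct and follows essentially the same route as the paper: both use the Peirce relation $e_{i,j}=e_{i,i}e_{i,j}e_{j,j}$ together with $R e_{i,i},R e_{j,j}\subseteq S_e$ to decompose $R e_{i,j}$ into graded pieces, identify these with a direct-sum decomposition of $R$ by two-sided ideals, and then invoke simplicity. The only difference is cosmetic---the paper extracts central orthogonal idempotents $r^{(g)}$ from the decomposition of $e_{i,j}$ and writes $R=\bigoplus_g R r^{(g)}$, whereas you work directly with the ideals $I^{(g)}_{i,j}$, which is slightly cleaner since it sidesteps the centrality verification.
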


\begin{proof}
The ``only if'' statement follows immediately 
from the definition of a very good grading and
the fact that $e_{i,i}$ is a
nonzero idempotent.
Now we show the ``if'' statement.
Take $i,j \in \overline{n}$. 
We claim that $e_{i,j}$ is homogeneous.
If we assume that the claim holds,
then for every $r \in R$ the element
$r e_{i,j}$ is homogeneous since it
belongs to $(R e_{i,i}) e_{i,j}$.
Now we show the claim.
Write $e_{i,j} = \sum_{g \in G} s_g$
for some $s_g \in S_g$ such that
$s_g = 0$ for all but finitely many 
$g \in G$. Since $e_{i,j} = 
e_{i,i} e_{i,j} e_{j,j}$
we may assume that for each $g \in G$
there is $r^{(g)} \in R$ such that
$s_g = r^{(g)} e_{i,j}$. 
From the equality $e_{i,j} = \sum_{g \in G} r^{(g)} e_{i,j}$ it follows that
$1 = \sum_{g \in G} r^{(g)}$ and that
$r^{(g)} \in Z(R)$ for $g \in G$.
Take $h \in G$.
Then $S_h \ni r^{(h)} e_{i,j} = 
\sum_{g \in G} r^{(h)} e_{i,i} 
r^{(g)} e_{i,j}$. Since 
$r^{(h)} e_{i,i} r^{(g)} e_{i,j} 
\subseteq S_g$, for $g \in G$, 
it follows that $r^{(h)} r^{(g)} = 0$,
if $g \neq h$, and 
$(r^{(h)})^2 = r^{(h)}$.
This implies that $R = \oplus_{g \in G}
R r^{(g)}$ as $R$-ideals. Simplicity of
$R$ implies that there is $p \in G$
such that $r^{(g)} = 0$, if $g \neq p$,
and $r^{(p)} = 1$. Thus, $e_{i,j} =
1 e_{i,j} = r^{(p)} e_{i,j} \in S_p$.
\end{proof}

Now we extend \cite[Prop. 2.3]{das99} to matrices with entries in associative unital rings.

\begin{prop}\label{prop:locallystrong}
Suppose that $M_n(R)$ is equipped 
with a very good $G$-grading and write 
$X_i := \{ \degree(e_{i,j}) \}_{j = 1}^n$.
Take $g \in G$.
Then $M_n(R)_g M_n(R)_{g^{-1}} = M_n(R)_e$ if and only if 
$g \in \cap_{i=1}^n X_i$.
\end{prop}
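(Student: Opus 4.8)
The plan is to reuse the explicit left identity $\epsilon_g$ constructed in the proof of Theorem~\ref{thm:main} and to reduce the equality $M_n(R)_g M_n(R)_{g^{-1}} = M_n(R)_e$ to the single condition $\epsilon_g = 1_S$. Put $S := M_n(R)$. Recall that $\degree(e_{i,j}) = g_i g_j^{-1}$, where $g_i := \degree(e_{i,1})$, so that $X_i = \{ g_i g_j^{-1} \}_{j=1}^n$ and, for $g \in G$, one has $e_{i,j} \in S_g$ precisely when $g_i g_j^{-1} = g$. Recall also the set $\mathcal{L}_g = \{ i \in \overline{n} \mid e_{i,j} \in S_g \text{ for some } j \in \overline{n}\}$ and the idempotent $\epsilon_g = \sum_{i \in \mathcal{L}_g} e_{i,i}$ from that proof.

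First I would translate membership in $\mathcal{L}_g$ into a condition on the sets $X_i$. Since $e_{i,j} \in S_g$ for some $j$ if and only if $g \in \{ g_i g_j^{-1} \}_{j=1}^n = X_i$, we obtain $\mathcal{L}_g = \{ i \in \overline{n} \mid g \in X_i \}$. Consequently $\mathcal{L}_g = \overline{n}$ if and only if $g \in X_i$ for every $i$, that is, if and only if $g \in \cap_{i=1}^n X_i$. Since $\epsilon_g = \sum_{i \in \mathcal{L}_g} e_{i,i}$ equals $1_S = \sum_{i=1}^n e_{i,i}$ exactly when $\mathcal{L}_g = \overline{n}$, it therefore suffices to prove that $S_g S_{g^{-1}} = S_e$ if and only if $\epsilon_g = 1_S$.

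Next I would establish that $S_g S_{g^{-1}} = \epsilon_g S_e$. The inclusion $S_g S_{g^{-1}} \subseteq S_e$ is immediate from the grading, and $S_g S_{g^{-1}}$ is a two-sided ideal of $S_e$ because $S_e S_g \subseteq S_g$ and $S_{g^{-1}} S_e \subseteq S_{g^{-1}}$. Since $\epsilon_g$ is a (two-sided) identity for $S_g S_{g^{-1}}$ by the proof of Theorem~\ref{thm:main}, every $x \in S_g S_{g^{-1}}$ satisfies $x = \epsilon_g x \in \epsilon_g S_e$; conversely $\epsilon_g \in S_g S_{g^{-1}}$ together with the ideal property gives $\epsilon_g S_e \subseteq S_g S_{g^{-1}}$. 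Hence $S_g S_{g^{-1}} = \epsilon_g S_e$.

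Finally I would conclude. The equality $S_g S_{g^{-1}} = S_e$ now reads $\epsilon_g S_e = S_e$. If this holds, then $1_S = \epsilon_g s$ for some $s \in S_e$, whence $\epsilon_g = \epsilon_g 1_S = \epsilon_g^2 s = \epsilon_g s = 1_S$, while the converse is trivial. Combining this with the previous paragraph yields $S_g S_{g^{-1}} = S_e \iff \epsilon_g = 1_S \iff g \in \cap_{i=1}^n X_i$, as desired. The only genuinely substantial point is the identity $S_g S_{g^{-1}} = \epsilon_g S_e$, everything else being bookkeeping with the degrees $g_i g_j^{-1}$; and even that reduces to recognizing that $\epsilon_g$ is the identity of the ideal $S_g S_{g^{-1}}$, which is already supplied by the proof of Theorem~\ref{thm:main}.
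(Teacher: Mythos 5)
Your proposal is correct and follows essentially the same route as the paper: reduce the equality $S_g S_{g^{-1}} = S_e$ to the condition $\epsilon_g = 1_S$ and then read off from the explicit formula $\epsilon_g = \sum_{i \in \mathcal{L}_g} e_{i,i}$ that this happens exactly when $g \in \cap_{i=1}^n X_i$. The paper dismisses both equivalences as ``not difficult to see''; you supply the details (the identity $S_g S_{g^{-1}} = \epsilon_g S_e$ and the idempotent argument), and they check out.
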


\begin{proof}
Put $S := M_n(R)$. By Theorem~\ref{thm:main}, $S$ is epsilon-strongly $G$-graded. 
It is not difficult to see that 
$S_g S_{g^{-1}} = S_e$ 
if and only 
if $\epsilon_g = 1$. By the proof of 
Theorem~\ref{thm:main} this is
in turn equivalent to $g \in \cap_{i=1}^n X_i$.
\end{proof}

\begin{cor}\label{cor:stronggrading}
Suppose that $M_n(R)$ is equipped with a very good $G$-grading and set
$g_i := \degree(e_{i,1})$ 
for $i \in \overline{n}$. 
Then $M_n(R)$ is strongly $G$-graded if and only if $\overline{g} = G$.
\end{cor}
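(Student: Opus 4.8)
The plan is to reduce the statement to Proposition~\ref{prop:locallystrong} via the standard characterization of strong gradings, and then to carry out a short set-theoretic computation. First I would recall the well-known fact that the $G$-graded ring $S := M_n(R)$ is strongly $G$-graded if and only if $S_g S_{g^{-1}} = S_e$ for every $g \in G$; the nonobvious direction follows from the inclusions $S_{gh} = S_{gh} S_{h^{-1}} S_h \subseteq S_g S_h \subseteq S_{gh}$, using $S_{h^{-1}} S_h = S_e$ and $1_S \in S_e$. Combining this with Proposition~\ref{prop:locallystrong}, strong grading is equivalent to the requirement that $g \in \cap_{i=1}^n X_i$ for \emph{every} $g \in G$, that is, to $\cap_{i=1}^n X_i = G$.

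Next I would compute the sets $X_i$ explicitly. Since $\degree(e_{i,j}) = g_i g_j^{-1}$, we have $X_i = \{ g_i g_j^{-1} \}_{j=1}^n = g_i \, \overline{g}^{-1}$, where $\overline{g}^{-1} := \{ g_j^{-1} \}_{j=1}^n$. In particular, recalling from the discussion preceding Theorem~\ref{thm:main} that $g_1 = e$, we obtain $X_1 = \overline{g}^{-1}$.

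For the forward implication, if $S$ is strongly $G$-graded then $\cap_{i=1}^n X_i = G$, so in particular $G \subseteq X_1 = \overline{g}^{-1}$, which forces $\overline{g}^{-1} = G$ and hence $\overline{g} = G$, since $g \mapsto g^{-1}$ is a bijection of $G$ onto itself. Conversely, if $\overline{g} = G$, then $\overline{g}^{-1} = G$ as well, so each $X_i = g_i \, \overline{g}^{-1} = g_i G = G$, whence $\cap_{i=1}^n X_i = G$ and $S$ is strongly $G$-graded.

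I do not anticipate any serious obstacle here, as the argument is essentially a bookkeeping computation once Proposition~\ref{prop:locallystrong} is in hand. The only points requiring a little care are the identity $X_i = g_i \, \overline{g}^{-1}$ (which rests on the formula $\degree(e_{i,j}) = g_i g_j^{-1}$ and on $g_1 = e$) and the equivalence $\overline{g}^{-1} = G \Leftrightarrow \overline{g} = G$, which uses the bijectivity of inversion on $G$.
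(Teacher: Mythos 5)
Your proposal is correct and follows essentially the same route as the paper: reduce to Proposition~\ref{prop:locallystrong}, observe that strongness amounts to $\cap_{i=1}^n X_i = G$, and use the identity $X_i = g_i X_1$ with $X_1 = (\overline{g})^{-1}$ to conclude. Your write-up merely makes explicit two points the paper leaves tacit, namely the standard reduction of strongness to the conditions $S_g S_{g^{-1}} = S_e$ and the formula $\degree(e_{i,j}) = g_i g_j^{-1}$.
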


\begin{proof}
Put $S := M_n(R)$.
By Proposition~\ref{prop:locallystrong}, it follows
that $S$ is strongly $G$-graded if and only if
$G = \cap_{i = 1}^n X_i$. 
This is equivalent to $X_i = G$ 
for every $i \in \overline{n}$.
Since $X_i = \degree(e_{i,1}) X_1$, for 
every $i \in \overline{n}$, this 
is in turn equivalent to 
$(\overline{g})^{-1} = X_1 = G$.
\end{proof}

As before, for every $g \in G$, 
we put $\mathcal{L}_g := \{ i \in \overline{n} \mid
\exists j \in \overline{n}, \ e_{i,j} \in S_g \}$ and
$\mathcal{R}_g := \{ j \in \overline{n} \mid
\exists i \in \overline{n}, \ e_{i,j} \in S_g \}$.
We saw in the proof of Theorem~\ref{thm:main} that $\epsilon_g = \sum_{i \in \mathcal{L}_g} e_{i,i} =
\sum_{j \in \mathcal{R}_{g^{-1}}} e_{j,j}$.

\begin{prop}\label{prop:kerepsilon}
Suppose that $M_n(R)$ is equipped with a very good $G$-grading, and let $g \in G$. With the above notations, the following assertions hold:
\begin{itemize}

\item[(a)] $\ker(\epsilon_g) = V_{G \setminus g \Supp(V)}$

\item[(b)] $\im(\epsilon_g) = 
V_{g \Supp(V)}$

\item[(c)] 
For every
$i \in \overline{n}$ we have
$\epsilon_g(v_i) = v_i$, if $g_i \in g\Supp(V)$, and
$\epsilon_g(v_i) = 0$, otherwise.

\item[(d)] $\epsilon_g|_{V_{g\Supp(V)}} = 
\id_{V_{g\Supp(V)}}$

\end{itemize}
\end{prop}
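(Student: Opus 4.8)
The plan is to reduce all four assertions to a single explicit description of the index set $\mathcal{L}_g$ in terms of $\Supp(V)$, and then simply read off (a)--(d) from the fact, recalled in the proof of Theorem~\ref{thm:main}, that $\epsilon_g = \sum_{i \in \mathcal{L}_g} e_{i,i}$ acts on $V$ as the coordinate projection onto the submodule spanned by $\{ v_i \mid i \in \mathcal{L}_g \}$.

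First I would use the identification $S = M_n(R) \cong \End_R(V)$ together with the computation preceding Theorem~\ref{thm:main}, namely that $e_{i,j}$ is a graded map of degree $\degree(e_{i,j}) = g_i g_j^{-1}$. Since $e_{i,j}$ is nonzero and the grading is a direct sum, $e_{i,j} \in S_g$ holds precisely when $g_i g_j^{-1} = g$, that is, $g_i = g g_j$. Because the assignment $i \mapsto g_i$ has image exactly $\Supp(V) = \{ g_1, \ldots, g_n \}$, this yields the characterization
$$\mathcal{L}_g = \{ i \in \overline{n} \mid g_i \in g\Supp(V) \}.$$
Indeed, $i \in \mathcal{L}_g$ means $e_{i,j} \in S_g$ for some $j$, i.e. $g_i = g g_j$ for some $j \in \overline{n}$, which is equivalent to $g_i \in g \{ g_j \mid j \in \overline{n} \} = g\Supp(V)$.

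Next I would compute the action of $\epsilon_g$ on the fixed basis. Since $e_{i,i}(v_k) = v_i$ if $k = i$ and $e_{i,i}(v_k) = 0$ otherwise, we obtain $\epsilon_g(v_k) = v_k$ when $k \in \mathcal{L}_g$ and $\epsilon_g(v_k) = 0$ when $k \notin \mathcal{L}_g$. Combined with the characterization of $\mathcal{L}_g$ above, this says exactly that $\epsilon_g(v_i) = v_i$ if $g_i \in g\Supp(V)$ and $\epsilon_g(v_i) = 0$ otherwise, which is assertion (c). From this, (a), (b) and (d) are immediate: as $\epsilon_g$ is the coordinate projection killing precisely those $v_k$ with $g_k \notin g\Supp(V)$, its kernel is $\bigoplus_{k \,:\, g_k \notin g\Supp(V)} R v_k = V_{G \setminus g\Supp(V)}$, giving (a); dually its image is $\bigoplus_{k \,:\, g_k \in g\Supp(V)} R v_k = V_{g\Supp(V)}$, giving (b); and since $\epsilon_g$ fixes each $v_k$ with $g_k \in g\Supp(V)$, it restricts to the identity on $V_{g\Supp(V)}$, giving (d).

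There is no genuine conceptual obstacle here; the only point requiring care is the bookkeeping in the first step. One must keep in mind that several distinct indices $i$ may share the same degree $g_i$, so that the correspondence between $\overline{n}$ and $\Supp(V)$ is many-to-one, and that $V_g = \bigoplus_{i \,:\, g_i = g} R v_i$. Once the equality $\mathcal{L}_g = \{ i \mid g_i \in g\Supp(V) \}$ is correctly established, the remaining steps are routine direct-sum manipulations.
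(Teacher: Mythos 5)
Your proof is correct and follows essentially the same route as the paper: both boil down to reading off the action of the diagonal idempotent $\epsilon_g=\sum_{i\in\mathcal{L}_g}e_{i,i}$ on the basis $\overline{v}$ and translating the index conditions via $\degree(e_{i,j})=g_ig_j^{-1}$. The only difference is organizational — you establish (c) first and deduce (a), (b), (d) from it, whereas the paper computes $\ker(\epsilon_g)$ and $\im(\epsilon_g)$ directly from the two descriptions $\epsilon_g=\sum_{j\in\mathcal{R}_{g^{-1}}}e_{j,j}=\sum_{i\in\mathcal{L}_g}e_{i,i}$ — and this has no mathematical significance.
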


\begin{proof}
First we prove (a). 
Since $\epsilon_g = \sum_{j \in \mathcal{R}_{g^{-1}}} e_{j,j}$
it follows that 
\begin{align*}
    \ker(\epsilon_g) &= \Span_R \{ v_j \mid 
j \in \overline{n}, \ j \notin \mathcal{R}_{g^{-1}} \} %\\&=
= \Span_R \{ v_j \mid j \in \overline{n}, \ 
\forall i \in \overline{n} \ \degree(e_{i,j}) \neq g^{-1} \}\\&
= \Span_R \{ v_j \mid j \in \overline{n}, \ 
\forall i \in \overline{n} \ g_i g_j^{-1} \neq g^{-1} \} %\\&
= \Span_R \{ v_j \mid j \in \overline{n}, \ 
\forall i \in \overline{n} \ g_j \neq g g_i \}.
\end{align*}
Thus, $\ker(\epsilon_g) = V_{ G \setminus g \Supp(V) }$.
Now we prove (b). 
Since $\epsilon_g = \sum_{i \in \mathcal{L}_g} e_{i,i}$
it follows that
\begin{align*}
\im(\epsilon_g)& = \Span_R \{ v_i \mid 
i \in \overline{n}, \ \exists  j \in \overline{n} \
e_{i,j} \in S_g \} %\\&
= \Span_R \{ v_i \mid 
i \in \overline{n}, \ \exists  j \in \overline{n} \
\degree(e_{i,j}) = g \} 
\\&
= \Span_R \{ v_i \mid 
i \in \overline{n}, \ \exists  j \in \overline{n} \
g_i g_j^{-1} = g \} %\\& 
= \Span_R \{ v_i \mid 
i \in \overline{n}, \ \exists  j \in \overline{n} \
g_i = g g_j \}.
\end{align*}
Thus, $\im(\epsilon_g) = V_{g \Supp(V)}$.
Statement (c) follows from the definition of $\epsilon_g$ and (a).
Statement (d) follows from (b) and (c).
\end{proof}

\begin{theorem}\label{thm:pre}
Suppose that $M_n(R)$ is equipped with a very good $G$-grading.
For each $g \in G$, put 
$X_g := \{ i \in \overline{n} \mid 
\degree(v_i) = g \}$.
If $|X_g| = |X_e|$, for every $g \in G$
with $X_g \neq \emptyset$, 
then $S$ is an
epsilon-crossed product.
\end{theorem}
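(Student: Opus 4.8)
The plan is to construct, for each $g \in G$, explicit elements $s_g \in S_g$ and $t_{g^{-1}} \in S_{g^{-1}}$ that are "partial permutation matrices" assembled from matrix units, and then to verify directly that $s_g t_{g^{-1}} = \epsilon_g$ and $t_{g^{-1}} s_g = \epsilon_{g^{-1}}$; by definition this shows that $S := M_n(R)$ is an epsilon-crossed product. Recall from the proof of Theorem~\ref{thm:main} and from Proposition~\ref{prop:kerepsilon} that, writing $g_i := \degree(v_i)$, one has $\mathcal{L}_g = \{ i \in \overline{n} \mid g_i \in g\Supp(V) \}$ together with $\epsilon_g = \sum_{i \in \mathcal{L}_g} e_{i,i}$ and $\epsilon_{g^{-1}} = \sum_{j \in \mathcal{L}_{g^{-1}}} e_{j,j}$. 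Since $g_i \in \Supp(V)$ for every $i$, the index set $\mathcal{L}_g$ decomposes as the disjoint union $\bigsqcup_{k \in \Supp(V) \cap g\Supp(V)} X_k$, and similarly $\mathcal{L}_{g^{-1}} = \bigsqcup_{h \in \Supp(V) \cap g^{-1}\Supp(V)} X_h$.

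Next I would set up the key bijection. The map $h \mapsto gh$ is a bijection from $\Supp(V) \cap g^{-1}\Supp(V)$ onto $\Supp(V) \cap g\Supp(V)$, since for $h \in \Supp(V)$ one has $gh \in \Supp(V)$ exactly when $h \in g^{-1}\Supp(V)$. For each such $h$, both $X_h$ and $X_{gh}$ are nonempty and hence, by the standing hypothesis that $|X_g| = |X_e|$ whenever $X_g \neq \emptyset$, both have cardinality $d := |X_e|$. I would therefore choose, for each $h \in \Supp(V) \cap g^{-1}\Supp(V)$, an arbitrary bijection $\phi_h : X_h \to X_{gh}$, and assemble these into a single bijection $\phi : \mathcal{L}_{g^{-1}} \to \mathcal{L}_g$. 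By construction $g_{\phi(j)} = g\, g_j$ for every $j \in \mathcal{L}_{g^{-1}}$.

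With $\phi$ in hand, define
\[
s_g := \sum_{j \in \mathcal{L}_{g^{-1}}} e_{\phi(j),j}, \qquad
t_{g^{-1}} := \sum_{j \in \mathcal{L}_{g^{-1}}} e_{j,\phi(j)}.
\]
Since $\degree(e_{\phi(j),j}) = g_{\phi(j)} g_j^{-1} = g$ and the grading is very good, each summand of $s_g$ lies in $S_g$, so $s_g \in S_g$; analogously $t_{g^{-1}} \in S_{g^{-1}}$. A direct computation using the relations $e_{a,b} e_{c,d} = \delta_{b,c}\, e_{a,d}$, together with the fact that $\phi$ is a bijection of $\mathcal{L}_{g^{-1}}$ onto $\mathcal{L}_g$, then yields $s_g t_{g^{-1}} = \sum_{i \in \mathcal{L}_g} e_{i,i} = \epsilon_g$ and $t_{g^{-1}} s_g = \sum_{j \in \mathcal{L}_{g^{-1}}} e_{j,j} = \epsilon_{g^{-1}}$, as required.

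The conceptual heart of the argument is the combinatorial bijection $\phi$: the cardinality hypothesis is used precisely to guarantee that the fibers $X_h$ and $X_{gh}$ have equal size whenever both are nonempty, which is exactly what allows the matrix units appearing in $s_g$ and $t_{g^{-1}}$ to be matched into a genuine partial permutation pairing $\mathcal{L}_{g^{-1}}$ with $\mathcal{L}_g$. Once this pairing is available the remaining verifications are routine matrix-unit algebra, so I expect the only real obstacle to be correctly identifying which index sets must be matched (namely $\mathcal{L}_{g^{-1}}$ and $\mathcal{L}_g$, via the degree shift by $g$) and confirming that the hypothesis indeed supplies bijections between the corresponding fibers.
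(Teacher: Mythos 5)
Your proposal is correct and is essentially the paper's own argument in different notation: the paper's maps $A_g$ and $B_{g^{-1}}$, defined on basis vectors via fixed bijections $b_{h,g} : X_g \to X_h$, are exactly your partial permutation matrices $s_g = \sum_{j \in \mathcal{L}_{g^{-1}}} e_{\phi(j),j}$ and $t_{g^{-1}} = \sum_{j \in \mathcal{L}_{g^{-1}}} e_{j,\phi(j)}$ built from the assembled fiberwise bijection $\phi$. Your write-up makes the verification $s_g t_{g^{-1}} = \epsilon_g$, $t_{g^{-1}} s_g = \epsilon_{g^{-1}}$ somewhat more explicit than the paper's ``it is easy to check,'' but the underlying construction is the same.
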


\begin{proof}
By Theorem~\ref{thm:main}, $S:=M_n(R)$ is epsilon-strongly $G$-graded.
Suppose that
$|X_g| = |X_e|$, for every $g \in G$
with $X_g \neq \emptyset$.
For all $g,h \in G$ with $X_g$ and $X_h$ nonempty,
fix a bijection $b_{h,g} : X_g \to X_h$
and put $c_{g,h} := b_{h,g}^{-1}$.
Take $g \in G$. We will now define $A_g \in S_g$ and
$B_{g^{-1}} \in S_{g^{-1}}$ in the following way.
Take $i \in \overline{n}$. If 
$g_i \notin g \Supp(V)$, then put 
$B_{g^{-1}}(v_i) := 0$. If $g_i = gh$
for some $h \in \Supp(V)$, then put
$B_{g^{-1}}(v_i) := v_{b_{h,gh}(i)}$.
Take $j \in \overline{n}$. If $g_j \notin g^{-1} \Supp(V)$,
then put $A_g(v_j) := 0$. If $g_j = g^{-1}h$
for some $h \in \Supp(V)$, then put
$A_g(v_j) := v_{ c_{h,g^{-1}h}(j) }$.
It is easy to check that
$A_g B_{g^{-1}} = \epsilon_g$ and 
$B_{g^{-1}} A_g = \epsilon_{g^{-1}}$.
Therefore $S$ is an epsilon-crossed product.
\end{proof}

\begin{lemma}\label{lem:QS}
Suppose that $N \in \N$ and $x_1,\ldots,x_N \in \R$. Then
$
\left( \sum_{i=1}^N x_i \right)^2 \leq N \sum_{i=1}^N x_i^2
$ 
with equality if and only if $x_1 = x_2 = \cdots = x_N$.
\end{lemma}

\begin{proof}
This follows from the Cauchy-Schwarz inequality.
However, in this special case, 
the claim follows immediately from the equality
$
N \sum_{i=1}^N x_i^2 - \left( \sum_{i=1}^N x_i \right)^2 
= \sum_{i < j} (x_i - x_j)^2 
.$
\end{proof}

Notice that the good gradings considered in \cite{das99} are in fact very good, and hence the following result generalizes \cite[Prop.~2.6]{das99} to the context of matrix rings over coefficient rings having invariant basis number (IBN for short). 

\begin{theorem}\label{thm:epsiloncrossed}
Suppose that $R$ has IBN and that $S:=M_n(R)$ is equipped with a very good $G$-grading. 
The following assertions are equivalent:
\begin{itemize}

\item[(i)] $S$ is an epsilon-crossed product;

\item[(ii)] $\Rank(V_g) = \Rank(V_e)$ for every $g\in \Supp(V)$;

\item[(iii)] $n^2 = |\Supp(V)| \cdot \Rank(S_e)$.

\end{itemize} 
\end{theorem}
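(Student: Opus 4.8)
The plan is to prove the three-way equivalence by splitting it into the combinatorial equivalence $(\mathrm{ii}) \Leftrightarrow (\mathrm{iii})$ and the module-theoretic equivalence $(\mathrm{i}) \Leftrightarrow (\mathrm{ii})$. Throughout, since $R$ has IBN and $V_g = \bigoplus_{i \in X_g} R v_i$ is free, I would record at the outset that $\Rank(V_g) = |X_g|$ for every $g \in G$, where $X_g := \{ i \in \overline{n} \mid \degree(v_i) = g \}$ as in Theorem~\ref{thm:pre}; in particular $X_g \neq \emptyset$ precisely when $g \in \Supp(V)$, and condition $(\mathrm{ii})$ becomes $|X_g| = |X_e|$ for all $g \in \Supp(V)$.

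For $(\mathrm{ii}) \Leftrightarrow (\mathrm{iii})$ I would set $x_g := \Rank(V_g)$ for $g \in \Supp(V)$ and $N := |\Supp(V)|$. Since the sets $X_g$ partition $\overline{n}$, one has $n = \sum_{g \in \Supp(V)} x_g$. Because the grading is very good, $S_e = \bigoplus_{(i,j):\, g_i = g_j} R e_{i,j}$ is free, and counting the pairs $(i,j)$ with $g_i = g_j$ group-by-group yields $\Rank(S_e) = \sum_{g \in \Supp(V)} x_g^2$. Thus $(\mathrm{iii})$ reads $\bigl( \sum_{g} x_g \bigr)^2 = N \sum_{g} x_g^2$, and Lemma~\ref{lem:QS}, applied to these $N$ positive integers, says exactly that this equality holds if and only if all the $x_g$ coincide, i.e. $x_g = x_e$ for every $g \in \Supp(V)$, which is $(\mathrm{ii})$.

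For $(\mathrm{ii}) \Rightarrow (\mathrm{i})$ I would simply observe that, after translating ranks to cardinalities via IBN, $(\mathrm{ii})$ is precisely the hypothesis $|X_g| = |X_e|$ of Theorem~\ref{thm:pre}, whence $S$ is an epsilon-crossed product. The substantive direction is $(\mathrm{i}) \Rightarrow (\mathrm{ii})$, where IBN is essential. Fix $g \in \Supp(V)$ and let $s_g \in S_g$, $t_{g^{-1}} \in S_{g^{-1}}$ be the crossed-product elements, so that $s_g t_{g^{-1}} = \epsilon_g$ and $t_{g^{-1}} s_g = \epsilon_{g^{-1}}$. Viewing these as $R$-endomorphisms of $V$, the degree convention gives $s_g(V_e) \subseteq V_g$ and $t_{g^{-1}}(V_g) \subseteq V_e$. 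Since $e, g \in \Supp(V)$, we have $e \in g^{-1}\Supp(V)$ and $g \in g\Supp(V)$, so by Proposition~\ref{prop:kerepsilon}(d) the idempotents $\epsilon_{g^{-1}}$ and $\epsilon_g$ restrict to the identity on $V_e$ and on $V_g$ respectively. Hence $s_g|_{V_e} : V_e \to V_g$ and $t_{g^{-1}}|_{V_g} : V_g \to V_e$ are mutually inverse $R$-linear maps, giving $V_e \cong V_g$ as left $R$-modules; invoking IBN yields $\Rank(V_g) = \Rank(V_e)$, which is $(\mathrm{ii})$.

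I expect the main obstacle to be the bookkeeping in $(\mathrm{i}) \Rightarrow (\mathrm{ii})$: one must check that restricting $s_g$ and $t_{g^{-1}}$ to the homogeneous components $V_e$ and $V_g$ is legitimate (this follows from their degree-$g$, resp. degree-$g^{-1}$, behaviour) and that the two composites act as the identity there (this is where Proposition~\ref{prop:kerepsilon}(d) together with the memberships $e \in g^{-1}\Supp(V)$ and $g \in g\Supp(V)$ enter). Once the isomorphism $V_e \cong V_g$ is secured, IBN converts it into the rank equality, and the remaining implications are routine.
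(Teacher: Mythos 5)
Your proposal is correct and follows essentially the same route as the paper: (ii)$\Rightarrow$(i) via Theorem~\ref{thm:pre}, (i)$\Rightarrow$(ii) by restricting the crossed-product elements to $V_e$ and $V_g$ and invoking Proposition~\ref{prop:kerepsilon} together with IBN, and (ii)$\Leftrightarrow$(iii) via the identities $n=\sum_g \Rank(V_g)$, $\Rank(S_e)=\sum_g (\Rank(V_g))^2$ and Lemma~\ref{lem:QS}. The only (cosmetic) difference is that you handle (ii)$\Leftrightarrow$(iii) in one stroke using the equality case of Lemma~\ref{lem:QS}, where the paper proves (ii)$\Rightarrow$(iii) by direct computation and reserves the lemma for the converse.
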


\begin{proof}
(i)$\Rightarrow$(ii): 
Suppose that $S$ is an epsilon-crossed product.
Take $g \in \Supp(V)$.
Then there are $A_g \in S_g$ and $B_{g^{-1}} \in S_{g^{-1}}$
such that $A_g B_{g^{-1}} = \epsilon_g$ and 
$B_{g^{-1}} A_g = \epsilon_{g^{-1}}$. 
Put $C_g := A_g|_{V_e}$ and $D_{g^{-1}} := B_{g^{-1}}|_{V_g}$.
Then $C_g : V_e \to V_g$ and $D_{g^{-1}} : V_g \to V_e$.
Also, from Proposition~\ref{prop:kerepsilon},
$C_g D_{g^{-1}} = \id_{V_g}$ and 
$D_{g^{-1}} C_g = \id_{V_e}$.
Using that $R$ has IBN,
it follows that $\Rank(V_g) = \Rank(V_e)$.

(ii)$\Rightarrow$(iii):
Suppose that $\Rank(V_g) = \Rank(V_e)$ for every 
$g \in \Supp(V)$. Note that 
\begin{equation}\label{eq:dimVsum}
\Rank(V) = \! \! \! \! \! \sum_{g \in \Supp(V)} 
\! \! \! \! \Rank(V_g)=
\! \! \! \! \! \! \sum_{g \in \Supp(V)} \! \! \! \Rank(V_e) 
= |\Supp(V)| 
\cdot \Rank(V_e) = |\Supp(V)| \cdot \Rank(V_g)
\end{equation}
for every $g \in \Supp(V)$.
 Also note that
\begin{equation}\label{eq:dimSe}
\Rank(S_e) = \Rank( \End_R(V)_e ) =
\sum_{g \in \Supp(V)} \Rank(\End_R(V_g)) =
\sum_{g \in \Supp(V)} (\Rank(V_g))^2.
\end{equation}
Therefore, from 
(\ref{eq:dimVsum}) and (\ref{eq:dimSe}), it follows that
\begin{align*}
n^2 & = (\Rank(V))^2 = 
\sum_{g \in \Supp(V)} \Rank(V) \cdot \Rank(V_g)
\\
& = \sum_{g \in \Supp(V)} |\Supp(V)| \cdot (\Rank(V_g))^2 = 
|\Supp(V)| \cdot \Rank(S_e). 
\end{align*}
(iii)$\Rightarrow$(ii):
Suppose that $n^2 = |\Supp(V)| \cdot \Rank(S_e)$. 
Then, from (\ref{eq:dimSe}), it follows that
\[
 \Biggl(
 \sum_{g \in \Supp(V)} \Rank(V_g) \Biggr)^2 
= n^2 = 
|\Supp(V)| \cdot \sum_{g \in \Supp(V)} (\Rank(V_g))^2.
\]
By Lemma~\ref{lem:QS}, it follows that 
$\Rank(V_g) = \Rank(V_e)$ for all $g \in \Supp(V)$.

(ii)$\Rightarrow$(i):
This follows from Theorem 
\ref{thm:pre}.
\end{proof}

\begin{exmp}
We will now illustrate Theorem
\ref{thm:pre} and Theorem
\ref{thm:epsiloncrossed} with some
concrete examples where 
$S:=M_3(R)$. We will use the 
notation from Theorem~\ref{thm:pre}.

(a) Define a very good 
$\Z_3$-grading on $S$ by
\[S_0:= 
\left[
\begin{array}{ccc}
R & 0 & 0 \\
0 & R & 0 \\
0 & 0 & R
\end{array}
\right], \quad
S_1:=
\left[
\begin{array}{ccc}
0 & R & 0 \\
R & 0 & R \\
0 & R & 0
\end{array}
\right]
\quad \mbox{and} \quad
S_2 := \left[
\begin{array}{ccc}
0 & 0 & R \\
0 & 0 & 0 \\
R & 0 & 0
\end{array}
\right].
\]
Since $|X_0| = |X_1| = |X_2| = 1$,
Theorem~\ref{thm:pre} implies that
$S$ is an epsilon-crossed
product. Indeed, $S$ is even a 
classical $\Z_3$-crossed product.

(b) Define a very good $\Z$-grading on $S$
by putting $S_n:=\{0\}$, 
if $ |n |>2$, and
\begin{displaymath}
	S_0 := \left[\begin{array}{ccc}
	R & 0 & 0 \\
	0 & R & 0 \\
	0 & 0 &  R
\end{array}\right]
\quad
S_1 := \left[\begin{array}{ccc}
	0 & R & 0 \\
	0 & 0 & R \\
	0 & 0 & 0
\end{array}\right]
\quad
S_2 := \left[\begin{array}{ccc}
	0 & 0 & R \\
	0 & 0 & 0 \\
	0 & 0 & 0
\end{array}\right]
\end{displaymath}
and $S_{-n}:=S_n^t,$ 
for $n\in\{1,2\}.$ 
Since $|X_0| = |X_{-1}| = |X_{-2}| = 1$
and $X_n = \emptyset$ for 
$n \in \Z \setminus \{ 0,-1,-2 \}$,
Theorem~\ref{thm:pre} implies that
$S$ is an epsilon-crossed product.
Note that $S$ is not a classical $\Z$-crossed
product since it is not strongly graded
(for instance $S_3 S_{-3} = \{ 0 \}
\neq S_0$).

(c) Define a very good $\Z_2$-grading
on $S$ by putting
\[S_0 := \left[
\begin{array}{ccc}
R & R & 0 \\
R & R & 0 \\
0 & 0 & R
\end{array}
\right]
\quad \mbox{and} \quad
S_1 := \left[
\begin{array}{ccc}
0 & 0 & R \\
0 & 0 & R \\
R & R & 0
\end{array}
\right].
\]
With this grading, $S$ may, or may not
be an epsilon-crossed product,
depending on the choice of $R$.

Case 1: Suppose that $R$ has  IBN.
Since $|X_0|=2 \neq 1 = |X_1|$,
Theorem~\ref{thm:epsiloncrossed}
implies that $S$ is not 
an epsilon-crossed product.

Case 2: Suppose that $K$ is a 
field and that $R$ denotes the 
Leavitt algebra $L_K(1,2)$ over $K$.
Recall that $R$ is the $K$-algebra
generated by four elements 
$x_1,x_2,y_1,y_2$ subject to the 
relations $y_1 x_1 = y_2 x_2 = 1$,
$y_2 x_1 = y_1 x_2 = 0$ and
$x_1 y _1 + x_2 y_2 = 1$.
It is well known that $S$ does 
not have IBN. Furthermore, since
\[ \left[
\begin{array}{ccc}
0 & 0 & y_1 \\
0 & 0 & y_2 \\
x_1 & x_1 & 0
\end{array}
\right]^2
= 
 \left[
\begin{array}{ccc}
y_1 x_1 & y_1 x_2 & 0 \\
y_2 x_1 &  y_2 x_2 & 0\\
0 & 0 & x_1 y _1 + x_2 y_2
\end{array}
\right]
=
 \left[
\begin{array}{ccc}
1 & 0 & 0 \\
0 &  1 & 0\\
0 & 0 & 1
\end{array}
\right]
\]
it follows that $S$ is a $\Z_2$-crossed
product. 
\end{exmp}

\subsection{Good gradings and graded isomorphisms}

Suppose that $S$ and $S'$ are $G$-graded rings.
A ring homomorphism $f : S \to S'$ is 
said to be \emph{graded} if $f(S_g) \subseteq S_g'$ for 
every $g \in G$.
In that case, if $f$ is bijective,
then $f$ is said to a \emph{graded isomorphism}
and $S$ and $S'$ are said to be 
\emph{graded isomorphic}.
The following is clear.

\begin{lemma}\label{esiso} Suppose that $S$ and $S'$ are $G$-graded isomorphic rings. 
If $S$ is epsilon-strongly $G$-graded, then 
$S'$ is epsilon-strongly $G$-graded.
\end{lemma}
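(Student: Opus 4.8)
The plan is to unwind the definition of epsilon-strongly graded ring through the given graded isomorphism. Let $f : S \to S'$ be a graded isomorphism and suppose $S$ is epsilon-strongly $G$-graded, so that for each $g \in G$ there is an element $\epsilon_g \in S_g S_{g^{-1}}$ satisfying $\epsilon_g s = s \epsilon_{g^{-1}} = s$ for every $s \in S_g$. The natural candidate for the corresponding element in $S'$ is simply $\epsilon_g' := f(\epsilon_g)$, and the whole proof amounts to verifying that this candidate witnesses the epsilon-strong condition on $S'$.

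First I would check that $\epsilon_g' \in S_g' S_{g^{-1}}'$. Since $\epsilon_g \in S_g S_{g^{-1}}$, we may write $\epsilon_g = \sum_k a_k b_k$ with $a_k \in S_g$ and $b_k \in S_{g^{-1}}$. Because $f$ is a graded ring homomorphism, $f(\epsilon_g) = \sum_k f(a_k) f(b_k)$ with $f(a_k) \in S_g'$ and $f(b_k) \in S_{g^{-1}}'$, so indeed $f(\epsilon_g) \in S_g' S_{g^{-1}}'$. Next I would verify the defining identity. Take any $s' \in S_g'$; since $f$ restricts to a bijection $S_g \to S_g'$, there is a unique $s \in S_g$ with $f(s) = s'$. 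Applying $f$ to the equations $\epsilon_g s = s$ and $s \epsilon_{g^{-1}} = s$ and using multiplicativity of $f$ gives $f(\epsilon_g) s' = s'$ and $s' f(\epsilon_{g^{-1}}) = s'$, which is exactly the required condition for $S'$ with the elements $\epsilon_g' = f(\epsilon_g)$.

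I do not expect any serious obstacle here; the result is correctly flagged as ``clear'' in the excerpt. The only points requiring a moment of care are that $f$ maps the product set $S_g S_{g^{-1}}$ into $S_g' S_{g^{-1}}'$ (which follows from $f$ being a graded homomorphism, not merely an additive map) and that surjectivity of $f$ onto each homogeneous component $S_g'$ is used to ensure the identity holds for \emph{all} $s' \in S_g'$, not just those in the image of a proper subset. Both are immediate consequences of $f$ being a graded isomorphism. One could streamline the whole argument by invoking the uniqueness of the elements $\epsilon_g$ noted after the definition in the excerpt, concluding that the $\epsilon_g'$ we produced must coincide with the canonically defined ones for $S'$, but this observation is not needed to complete the proof.
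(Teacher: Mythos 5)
Your argument is correct and is precisely the standard transport-of-structure verification that the paper omits by declaring the lemma ``clear'': push each $\epsilon_g$ through the graded isomorphism and check the two defining properties. The only step worth making explicit is that a bijective graded homomorphism automatically restricts to a bijection $S_g \to S_g'$ (i.e. $f(S_g) = S_g'$, which follows from directness of the decomposition of $S'$), since the paper's definition of ``graded'' only gives the inclusion $f(S_g) \subseteq S_g'$; you use this surjectivity onto each component and correctly flag it, so the proof is complete.
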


 It follows by \cite[Example 1.3]{das99} that good gradings are not preserved by graded isomorphisms. However, it follows by  Theorem~\ref{thm:main} and Lemma~\ref{esiso} that gradings isomorphic to very good gradings are  epsilon-strong. Therefore, by the next result,
 we automatically obtain many new classes of rings endowed with epsilon-strong 
gradings.
 
 \begin{prop}\label{eexm}
 Suppose that $K$ is a field, 
 $p$ is a prime number, and $m,n$ are
 positive integers. 
 Then, under the assumption that we are considering gradings that satisfy $K 1_{M_m(K)} \subseteq M_m(K)_e$, all of the following gradings are epsilon-strong:
\begin{itemize}
    
    \item[(1)] any $C_p$-grading on $M_m(K)$ where $p \nmid m$;
    \item[(2)] any $C_2$-grading on $M_2(K)$ where $\Char(K)\neq 2$ or $K$ is algebraically closed;
    \item[(3)]  any $G$-grading on $M_m(K),$ where $e_{i,j}$ is homogeneous for some $i,j \in \{1,\ldots,n\}$;
    \item[(4)] any $C_n$-grading on $M_m(K)$ 
    if $K$ is algebraically closed and
    $\Char(K) \nmid n$. 
\end{itemize}
 \end{prop}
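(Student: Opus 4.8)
The plan is to reduce each of the four assertions to a statement about graded isomorphism type and then to feed it into the machinery already developed. First I would record that the standing hypothesis $K 1_{M_m(K)} \subseteq M_m(K)_e$ is precisely the requirement that the grading be a grading of $M_m(K)$ as a $K$-\emph{algebra}; this is what allows the classification theory of gradings on matrix algebras to be applied. Next I would note that any elementary grading on $M_m(K)$ (the good gradings of \cite{das99}, in which each $K e_{i,j}$ is homogeneous) automatically satisfies $K 1 \subseteq S_e$, because every $e_{i,i}$ has degree $e$; hence, by the Proposition characterising very good gradings among good ones, such a grading is very good, and therefore epsilon-strongly graded by Theorem~\ref{thm:main}. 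Since epsilon-strongness is preserved by graded isomorphism (Lemma~\ref{esiso}), it suffices to exhibit, in each case, a graded $K$-algebra isomorphism of the given grading onto an elementary grading. Where this turns out to be impossible, I would instead exhibit an isomorphism onto a \emph{crossed-product} grading, which is strongly graded and hence epsilon-strong.

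To carry this out I would invoke the Bahturin--Sehgal--Zaicev-type decomposition (see \cite{BSZ} and the classification in \cite{val02,zai01}): up to graded isomorphism, $M_m(K) \cong M_r(K) \otimes D$, the tensor product of an elementary grading with a fine (graded division) grading on an algebra $D$ supported on a subgroup $H$ of the grading group, and the problem becomes to control the factor $D$. For (4), with $K$ algebraically closed and $\Char K \nmid n$, the support $H$ is a subgroup of the cyclic group $C_n$; a nontrivial fine grading forces $H$ to carry a nondegenerate alternating bicharacter, which no nontrivial cyclic group admits, so $D = K$ and the grading is elementary. For (1) the support lies in $C_p$; if $D \neq K$ then $H = C_p$ and $D$ is a graded-division central simple algebra of degree divisible by $p$, whence $p \mid m$, contradicting $p \nmid m$; thus $D = K$. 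For (3) a homogeneous matrix unit is a homogeneous element of rank one, and a nontrivial fine factor admits no such element (its nonzero homogeneous elements are invertible), so again $D = K$; here one should read the index set as $\overline{m}$. In each of these cases the grading is graded isomorphic to an elementary one, hence very good, hence epsilon-strong.

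The main obstacle is case (2) together with the passage to non-algebraically-closed fields. Over an algebraically closed $K$ the $C_2$-grading on $M_2(K)$ is handled exactly as in (4), since $C_2$ admits no nontrivial alternating bicharacter and $K$ has no proper quadratic extension, forcing $D = K$. Over a field with $\Char K \neq 2$, however, the fine factor need not vanish: it can be a crossed product $L \rtimes C_2$ for a separable quadratic extension $L/K$ (for instance $\mathbb{C}\rtimes\mathrm{Gal}(\mathbb{C}/\mathbb{R}) \cong M_2(\mathbb{R})$), and such a grading is \emph{not} graded isomorphic to any elementary grading. The key point is that this surviving factor is nonetheless strongly graded, hence epsilon-strong, so the argument must branch: rather than routing everything through ``very good'', one recognises the grading as a crossed product and applies the fact that crossed products are strongly, and therefore epsilon-strongly, graded. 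The delicate work throughout lies in justifying the decomposition and the triviality (or crossed-product form) of $D$ over $K$ rather than over $\overline{K}$, by a central-simple-algebra degree and Galois-descent argument; this is exactly where $p \nmid m$ and the characteristic hypotheses are used, the latter also guaranteeing that the relevant quadratic extension is separable and that the dual action is diagonalisable.
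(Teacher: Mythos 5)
Your proposal is correct in substance but takes a genuinely different route from the paper. The paper's proof is a one-line citation: item (1) is \cite[Prop.~3.2]{BD}, item (2) follows from \cite[Cor.~4.2 and Cor.~4.9]{das99}, item (3) from \cite[Cor.~1.6]{das99}, and item (4) from \cite[Prop.~2.2]{BD}; those references show that each grading in question is, up to graded isomorphism, a good grading in the sense of \cite{das99} (hence, under the standing hypothesis $K1_{M_m(K)}\subseteq M_m(K)_e$, a very good one) or, in case (2), possibly a crossed product, after which Theorem~\ref{thm:main}, Lemma~\ref{esiso} and the fact that crossed products are strongly (hence epsilon-strongly) graded finish the job. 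You instead re-derive the cited classification results from the graded Artin--Wedderburn/Bahturin--Sehgal--Zaicev-type decomposition $M_m(K)\cong\End_D(V)$ with $D$ a graded division algebra: your dimension count for (1), the commutativity/non-simplicity of twisted group algebras of cyclic groups for (4), the rank argument for (3), and the crossed-product branch for (2) are all viable, and this buys a self-contained, uniform mechanism at the cost of several steps needing more care than your sketch supplies. In particular, in (3) the rank-one argument must be applied to homogeneous elements of $M_r(D)$ (block matrices whose nonzero blocks are invertible in $D$, hence of rank at least the degree of $D$), not to elements of $D$ itself; in (4) the reduction to a twisted group algebra uses $D_e=K$, which is exactly where algebraic closure enters; and over non-algebraically-closed fields the central simplicity of $D$ over $K$ has to be extracted from $Z(M_m(K))=K$ rather than taken from the BSZ tensor-product form. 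You also correctly flag two points the paper glosses over: the index set in (3) should be read as $\overline{m}$, and in case (2) with $\Char(K)\neq 2$ and $K$ not algebraically closed the grading need not be isomorphic to a very good one (e.g.\ $M_2(\mathbb{R})\cong\mathbb{C}\rtimes C_2$), so the strongly-graded branch is genuinely needed there and is not covered by the lead-in remark that gradings isomorphic to very good gradings are epsilon-strong.
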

\begin{proof} (1) Follows by \cite[Prop.~3.2]{BD}, (2) is a consequence of \cite[Cor.~4.2 and Cor.~4.9]{das99}, (3) is obtained from \cite[Cor.~1.6]{das99} and (4) is implied by \cite[Prop.~2.2]{BD}.
\end{proof}

\begin{remark}
    It follows by (3) of Proposition~\ref{eexm}  that  \cite[Example 1.7]{das99} provides an example of an epsilon-strong $C_2$-grading on $M_2(K)$ in which no $e_{i,j}$ is homogeneous.
\end{remark}


\begin{thebibliography}{99}

% OK
\bibitem{AAPI}
G. Abrams and G. Aranda Pino,
The Leavitt path algebra of a graph,
{\it J. Algebra} {\bf 293} (2005), 319--334.

%\bibitem{AAP} G. Abrams and G. Aranda Pino. The Leavitt path algebras of arbitrary graphs,
%{\it Houston J. Math.} \textbf{34} (2008), no. 2, 423--442.

%\bibitem{AA} G. Abrams, P. Ara and M. S. Molina. Leavitt Path Algebras, Lecture Notes in Mathematics 2191, {\it Springer-Verlag} (2017)


% OK
\bibitem{BSZ}
Y. A. Bahturin, S. K. Sehgal and M. V. Zaicev,
Group gradings on associative algebras, {\it J. Algebra} {\bf 241} (2001), no. 2, 677--698.

% OK
\bibitem{bah02}
Y. A. Bahturin and M. V. Zaicev,
Group gradings on matrix algebras,
{\it Canad. Math. Bull.} {\bf 45} (2002), no. 4, 499--508.

% OK
\bibitem{BZS}
L. Bemm, E. Z. Fornaroli and E. A. Santulo Jr.,
A cohomological point of view on gradings on algebras with multiplicative basis,
{\it J. Pure Appl. Algebra} \textbf{223} (2019), no. 2, 769--782.

% OK
\bibitem{BD}
C. Boboc and S. D\u{a}sc\u{a}lescu,
Gradings of matrix algebras by cyclic groups,
{\it Comm. Algebra} {\bf 29} (2001), no. 11, 5013--5021.

% OK
\bibitem{cae02}
S. Caenepeel, S. D\u{a}sc\u{a}lescu and C. N\u{a}st\u{a}sescu,
On gradings of matrix algebras and descent theory,
{\it Comm. Algebra} {\bf 30} (2002), no. 12, 5901--5920.

% OK
\bibitem{das99} 
S. D\u{a}sc\u{a}lescu, B. Ion, C.  N\u{a}st\u{a}sescu and J. Rios Montes,
Group gradings on full matrix rings,
{\it J. Algebra} {\bf 220} (1999), no. 2, 709--728.

% OK
\bibitem{das2001}
S. D\u{a}sc\u{a}lescu, A. V. Kelarev and L. van Wyk,
Semigroup gradings of full matrix rings,
{\it Comm. Algebra} {\bf 29} (2001), no. 11, 5023--5031.

% OK
\bibitem{DinizPellegrino2021}
D. Diniz and D. Pellegrino, 
On the number of gradings on matrix algebras,
{\it Linear Algebra Appl.} {\bf 624} (2021), 14--26.

% OK
\bibitem{Dokuchaev2019}
M. Dokuchaev,
Recent developments around partial actions, {\it S\~{a}o Paulo J. Math. Sci.} {\bf 13} (2019), no. 1, 195--247. 

% OK
\bibitem{ExelBook}
R. Exel,
{\it Partial dynamical systems, Fell bundles and applications}, Mathematical Surveys and Monographs, {\bf 224}. American Mathematical Society, Providence, RI (2017).

% OK
%\bibitem{DE} 
%M. Dokuchaev and R. Exel,
%Associativity of crossed products by partial actions, enveloping actions and partial representations,
%{\it Trans. Amer. Math. Soc.} \textbf{357} (2005), no. 5, 1931--1952.

% OK\bibitem{DES}
%M. Dokuchaev, R. Exel and J. J. Sim\'{o}n,
%Crossed products by twisted partial actions and graded algebras,
%{\it J. Algebra} {\bf 320} (2008), no. 8,  3278--3310.

% OK
\bibitem{jones}
M. R. Jones,
Elementary and good group gradings of incidence algebras over partially-ordered sets with cross-cuts,
{\it Comm. Algebra} {\bf 34} (2006), no. 7, 2369--2387.

% OK
\bibitem{kelarev1995}
A. V. Kelarev,
Applications of epigroups to graded ring theory,
{\it Semigroup Forum} {\bf 50} (1995), no. 3, 327--350.

% OK
\bibitem{kel02}
A. V. Kelarev,
{\it Ring constructions and applications},
Series in Algebra, 9. World Scientific Publishing Co., Inc., River Edge, NJ (2002).

% OK
\bibitem{lundstrom2014}
P. Lundstr\"{o}m,
Good Magma Gradings on Rings,
{\it Comm. Algebra} {\bf 42} (2014), no. 12, 5357--5373.

\bibitem{NOCorr}
P. Lundstr\"{o}m and J. \"{O}inert,
Corrigendum: Group gradations on Leavitt path algebras.
To appear in {\it J. Algebra Appl.} (2024), 2492001, 3 pages.

%NOT USED!!!
%\bibitem{NO2}
%P. Lundstr\"{o}m and J. \"{O}inert,
%Corrigendum: Group gradations on Leavitt path algebras. To appear in {\it J. Algebra Appl.} (2024)

%\bibitem{nas82}
%C. N{\v a}st{\v a}sescu and F. Van Oystaeyen, {\it Graded Ring
%Theory}, North-Holland Publishing Co., Amsterdam-New York (1982).

% OK
\bibitem{nas04}
C. N{\v a}st{\v a}sescu and F. Van Oystaeyen,
{\it Methods of Graded Rings},
Lecture Notes in Mathematics, 1836. Springer-Verlag, Berlin (2004).


% OK
\bibitem{NO}
P. Nystedt and J. \"{O}inert,
Group gradations on Leavitt path algebras,
{\it J. Algebra Appl.} {\bf 19}(9) 
(2020), 2050165.

% OK
\bibitem{NYO}
P. Nystedt, J. \"{O}inert and  H. Pinedo,
Epsilon-strongly graded rings, separability
and semisimplicity,
{\it J. Algebra} {\bf 514} (2018), 1--24.

% OK
\bibitem{price}
K. L. Price, 
Good gradings of generalized incidence rings,
{\it Comm. Algebra} {\bf 41} (2013), no. 10, 3668--3678.

% OK
\bibitem{priceCorr}
K. L. Price,
Corrigendum: Good gradings of generalized incidence rings,
{\it Comm. Algebra} {\bf 46} (2018), no. 3, 1047.

% OK
%\bibitem{RamosBorgesFidelisDiniz2018}A. Ramos Borges, C. Fidelis and D. Diniz, Graded isomorphisms on upper block triangular matrix algebras, {\it Linear Algebra Appl.} {\bf 543} (2018), 92--105.

%% INCLUDE THIS???
%
%Ramos, Alex (BR-UFCG-UM); Diniz, Diogo (BR-UFCG-UM)
%Graded identities and isomorphisms on algebras of upper block-triangular matrices. (English summary)
%J. Algebra 523 (2019), 201–216.

% OK
\bibitem{val02}
A. Valenti,
The graded identities of upper triangular matrices of size two,
{\it J. Pure Appl. Algebra} {\bf 172} (2002), no. 2--3, 325--335.

% OK
\bibitem{val03}
A. Valenti and M. Zaicev,
Abelian gradings on upper-triangular matrices,
{\it Arch. Math. (Basel)} {\bf 80} (2003), no. 1, 12--17.

% OK
\bibitem{ValentiZaicev2012}
A. Valenti and M. Zaicev, 
Abelian gradings on upper block triangular matrices.
{\it Canad. Math. Bull.} {\bf 55} (2012), no. 1, 208--213.

% OK
\bibitem{Yasumura2018}
F. Y. Yasumura, 
Group gradings on upper block triangular matrices, 
{\it Arch. Math. (Basel)} {\bf 110} (2018), no. 4, 327--332.

% OK
\bibitem{zai01}
M. V. Za\u{i}tsev and S. K. Segal,
Finite gradings of simple Artinian rings,
{\it Moscow Univ. Math. Bull.} {\bf 56} (2001), no. 3, 21--24.

\end{thebibliography}
\end{document}